\documentclass{article}

\usepackage[T1]{fontenc}
\usepackage[utf8]{inputenc}
\usepackage[english]{babel}
\usepackage{amsmath,amssymb,amsthm,mathtools}
\mathtoolsset{showonlyrefs=true}

\usepackage{changepage}

\usepackage{atbegshi}
\usepackage{url}
\usepackage{graphicx}
\usepackage{pstricks}
\usepackage{multido}
\usepackage{placeins}

\AtBeginShipout{%
  \setbox0=\box\AtBeginShipoutBox
  \dimen0=\ht0
  \advance\dimen0 by \dp0
  \setbox\AtBeginShipoutBox=\hbox{%
    \kern-0.125\wd0
    \vbox{%
      \vskip-0.135\dimen0
      \hbox{\scalebox{1.25}{\box0}}%
    }%
  }%
}

\numberwithin{equation}{section}

\newtheorem{theorem}{Theorem}[section]
\newtheorem{lemma}[theorem]{Lemma}

\theoremstyle{definition}
\newtheorem{definition}[theorem]{Definition}

\newcommand{\Ptwo}{P\otimes P}

\newcommand{\Aboxk}[3]{#1\,\boxtimes_{#2}#3}

\newcommand{\Aocc}[2]{#1\circ #2}

\newcommand{\setcoord}[3]{#1(#2\leftarrow #3)}

\def\zu{\{\,0,1\,\}}
\def\zuen{\smash{\{\,0,1\,\}^n}}

\def\unn{\{\,1,\dots,n\,\}}
\def\zun{\{\,0,\dots,n\,\}}

\begin{document}

%\title{A quantitative estimate of the BK gap}
\title{The case of equality in BK }

\author{Rapha\"el Cerf\thanks{Universit\'e Paris-Saclay, CNRS, Laboratoire de math\'ematiques d'Orsay, 91405 Orsay.}
 \and Pierre Tesio\footnotemark[1]}
%\date{}
\maketitle

\begin{abstract}
  We characterize the pairs of increasing events $A,B$ for which there is
  equa\-li\-ty in the BK inequality. Namely, we show that 
    $$P(A\circ B)=P(A)P(B)$$ if and only if
  all the configurations in $A\times B$ admit disjoint witnesses 
  for $A$ and $B$.
  %, but none of them  has a symmetric disjoint pivot.
    %We state a strengthened BK inequality. 
    We discuss the strengthened BK inequality, and we
    provide a new simplified proof of this inequality.
  %  Finally,
    %we obtain partial results on the BK gap
  %$$\Delta_{BK}(A,B) = P(A)P(B)-P(A\circ B)\,,$$
  %and we propose a quantitative lower bound of it. It 
  %involves terms which were already present in the strengthened 
  %FKG inequality of Talagrand.
\end{abstract}

\section{Introduction}

We fix an integer \(n\geq 1\) and a real number \(p\) in \(]0,1[\).
We endow the hypercube \(\Omega=\{\,0,1\,\}^n\) with the Bernoulli product measure \(P\) of parameter~\(p\).
%We fix an integer \(n\geq 1\), and for convenience, we define
%\(\Omega=\{\,0,1\,\}^n\).
%%$$\Omega\,=\,\{\,0,1\,\}^n\,.$$
%A generic element of \(\zuen\) is denoted by \(x=(x_1,\dots,x_n)\).
%We fix a parameter \(p\) in \(]0,1[\) and
%we endow \(\Omega\) with the Bernoulli product measure \(P\) of parameter~\(p\).
%\smallskip
%
%\noindent
%{\bf Order.}
%We endow $\Omega$ with the product order:
An element \(x=(x_1,\dots,x_n)\) of $\Omega$ is smaller than or equal to
an element \(y=(y_1,\dots,y_n)\) of \(\Omega\)
if \(x_i\leq y_i\) for \(1\leq i\leq n\).
A subset \(A\) of \(\Omega\) is increasing if the map
\(x\in\Omega\mapsto 1_A(x)\) is non-decreasing.
%for the product order.
\smallskip

Besides the FKG inequality, the most significant correlation inequality
on the space $(\Omega,P)$
is the Van den Berg-Kesten inequality \cite{BK},
called in short the BK inequality. It says that, for any 
%pair of 
increasing
events $A,B$ included in $\Omega$, we have
\begin{equation}
  \label{bkineq}
  P(A\circ B)\,\leq\,P(A)P(B)\,,
\end{equation}
where \(A\circ B\) denotes the disjoint occurrence of \(A\) and \(B\).
%\subsection{The case of equality in the BK inequality}
The aim of this note is to understand the equality case in the BK inequality.
%The obstruction to equality rely on
The condition for equality will be expressed with 
the classical notion of witness, that we recall in the next definition.
\begin{definition}[Witness]
  \label{dewi}
  Let \(A\) be a subset of \(\Omega\) and let \(x\) belong to \(A\).
  A subset \(I\) of \(\unn\) is a witness for \(A\) in \(x\) if any \(y\) in \(\Omega\)
  which coincides with \(x\) on the components whose index is in \(I\)
  is automatically in \(A\), i.e.,
  \begin{equation}
    \forall y\in\Omega\qquad
    \bigl[\,\forall i\in I\quad x_i\,=\,y_i\,\bigr]
    \quad\Longrightarrow\quad y\in A\,.
  \end{equation}
\end{definition}
\noindent
%The conditions for equality are most conveniently expressed in 
%the product space 
%%$\smash{\Omega\times\Omega=
%%\{\,0,1\,\}^n\times
%%\{\,0,1\,\}^n}$.
%\begin{equation}
%\Omega\times\Omega\,=\,
%\{\,0,1\,\}^n\times
%\{\,0,1\,\}^n\,.
%\end{equation}
The main obstruction to equality in BK comes from
pairs of configurations for which \(A\) and \(B\) cannot be certified by
disjoint witnesses. 
\begin{definition}[Disjoint witnesses]
  \label{defdisjpi}
  Let \(A\) and \(B\) be two subsets of \(\Omega\), and let 
  $(u,v)$ belong to $A\times B$.
  We say that there exist disjoint witnesses for $A,B$ in $(u,v)$ if 
  there exist two disjoint subsets $I,J$ of $\unn$ such that 
  $I$ is a witness for $A$ in $u$ and 
  $J$ is a witness for $B$ in $v$.
\end{definition}
\noindent
The disjoint occurrence $A\circ B$ of two events $A,B$ 
can be defined 
as the set of the configurations $u$ such that there exist 
disjoint witnesses for $A,B$ in $(u,u)$:
\begin{multline}
    A\circ B
    \,=\,
    \smash{\Big\{}\,u\in A\cap B:\exists\,I,J\subset\unn,\quad
    I\cap J\,=\,\varnothing,
    \\
    I \text{ is a witness for } A \text{ in } u,\quad
    J \text{ is a witness for } B \text{ in } u
  \,\smash{\Big\}}\,.
  \end{multline}
Our first main result is the following characterization of 
the case of equality in the BK
inequality.
\begin{theorem}%[Equality case in the BK inequality]
  \label{thrm1}
  Let \(p\) belong to  \(]0,1[\), and 
  let \(A,B\) be two increasing events included in $\Omega$.
  We have the equality 
  \begin{equation}
    P(A\circ B)\,=\,P(A)P(B)
  \end{equation}
  if and only if 
  every configuration belonging to $A\times B$ admits disjoint witnesses.
  %Any pair $(x,y)$ belonging to $A\times B$ admits disjoint witnesses;
    %For any $(x,y)$ in $A\times B$, there exist two 
    %disjoint subsets $I,J$ of 
    %\(\unn\) such that \(I\) is a witness for \(A\) in \(x\)
    %and \(J\) is a witness o \(B\) in \(y\);
  %There does not exist $(x,y)$ 
    %No configuration in $A\times B$ has a symmetric disjoint pivot.
   % , there exist two 
    %For every \(i\) in \(\unn\),
    %\begin{equation}
      %\Ptwo\bigl(
      %i \text{ is a symmetric disjoint pivot for } (A,B)
      %\bigr)\,=\,0\,.
    %\end{equation}
\end{theorem}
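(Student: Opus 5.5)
The plan is to prove the two directions separately. The converse direction is short and uses FKG. The direct direction uses an interpolation in the product space $(\Omega\times\Omega,\Ptwo)$ between $A\circ B$ and the set of pairs in $A\times B$ admitting disjoint witnesses; this interpolation also gives a proof of \eqref{bkineq} itself.

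\emph{Converse direction.} Suppose every $(u,v)\in A\times B$ admits disjoint witnesses. Take $u\in A\cap B$ and apply the hypothesis to the pair $(u,u)$. This gives disjoint $I,J$ that witness $A$ and $B$ in $u$, so $u\in A\circ B$. Hence $A\cap B\subset A\circ B$; since $A\circ B\subset A\cap B$ always, we get $A\circ B=A\cap B$. Because $A$ and $B$ are increasing, the FKG inequality gives $P(A\circ B)=P(A\cap B)\geq P(A)P(B)$. Combined with the BK inequality \eqref{bkineq}, this yields equality.

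\emph{Direct direction: the interpolating sets.} For $(u,v)\in\Omega\times\Omega$ and $0\le k\le n$, let $w^k$ be the configuration with $w^k_i=v_i$ for $i\le k$ and $w^k_i=u_i$ for $i>k$. Let $E_k$ be the set of pairs $(u,v)$ for which there exist disjoint $I,J$ with $I$ a witness for $A$ in $u$ and $J$ a witness for $B$ in $w^k$. Since $w^0=u$, we have $\Ptwo(E_0)=P(A\circ B)$. Since $w^n=v$, the set $E_n$ is exactly the set of pairs in $A\times B$ admitting disjoint witnesses. In particular $E_n\subset A\times B$, so $\Ptwo(E_n)\le P(A)P(B)$. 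The key step is to show
\begin{equation}
\Ptwo(E_{k-1})\,\le\,\Ptwo(E_k)\qquad\text{for }1\le k\le n.
\end{equation}

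\emph{The single-coordinate swap.} I would prove this by conditioning on all coordinates of $u$ and $v$ except $u_k,v_k$. Under this conditioning, define $f(a,b)\in\zu$ as the indicator that disjoint witnesses exist for $A$ in $u$ with $u_k=a$ and for $B$ in $w^{k}$ with its $k$-th coordinate set to $b$. The other coordinates of $w^{k-1}$ and $w^k$ agree, so $1_{E_{k-1}}=f(u_k,u_k)$ and $1_{E_k}=f(u_k,v_k)$. Because $A,B$ are increasing, $f$ is non-decreasing in each variable. With $X,Y$ independent Bernoulli$(p)$,
\begin{equation}
E f(X,Y)-E f(X,X)\,=\,p(1-p)\bigl[f(1,0)+f(0,1)-f(0,0)-f(1,1)\bigr].
\end{equation}
It therefore suffices to show that $f(0,0)+f(1,1)\le f(1,0)+f(0,1)$.
\begin{itemize}
\item If $f(0,0)=1$, then by monotonicity all four values equal $1$, and the inequality holds.
\item If $f(1,1)=1$ and $f(0,0)=0$, take the disjoint witnesses $I,J$ realizing $f(1,1)=1$. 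The index $k$ lies in at most one of them. If $k\notin I$, changing the $k$-th coordinate of $u$ to $0$ leaves $I$ a witness, so $f(0,1)=1$. If $k\notin J$, the symmetric argument gives $f(1,0)=1$.
\item If $f(1,1)=0$, then $f(0,0)=0$ by monotonicity, and the inequality is trivial.
\end{itemize}
This case analysis is the heart of the argument. The only subtle point is to check that the witness not containing $k$ remains a witness after the flip, which holds directly from Definition~\ref{dewi}.

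\emph{Conclusion.} Chaining the inequalities gives
\begin{equation}
P(A\circ B)=\Ptwo(E_0)\le\Ptwo(E_n)\le \Ptwo(A\times B)=P(A)P(B).
\end{equation}
If $P(A\circ B)=P(A)P(B)$, then $\Ptwo(E_n)=\Ptwo(A\times B)$. Since $p\in\,]0,1[$, every point of $\Omega\times\Omega$ has positive $\Ptwo$-mass, so $E_n=A\times B$. This says exactly that every configuration in $A\times B$ admits disjoint witnesses.
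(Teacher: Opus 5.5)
Your proof is correct. The direct implication follows the paper: your $E_k$ is $\Aboxk{A}{k}{B}$, and your case analysis when $f(1,1)=1$ is Lemma~\ref{keypoi} (the restricted two-variable function is never a double pivot). You also conclude by full support of $\Ptwo$, as the paper does.

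The converse is where you take a genuinely different route. The paper keeps the exact increment identity \eqref{secgap}. It then uses Lemma~\ref{suprilem} to show that $A\boxtimes B=A\times B$ rules out symmetric disjoint pivots in every pair, so every inequality in the chain \eqref{chainine} is an equality. You instead apply the hypothesis only to diagonal pairs $(u,u)$, which gives $A\circ B=A\cap B$. You then squeeze $P(A\cap B)$ between the Harris--FKG lower bound and the BK upper bound.

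Your argument is shorter and needs only the inequality $\Ptwo(E_{k-1})\le\Ptwo(E_k)$, not the exact pivot identity. It also yields extra information: in the equality case, $A\circ B=A\cap B$ and $A,B$ are independent. The price is that you import FKG as an outside tool. The paper's converse stays inside the interpolation scheme and ties each step of the gap to symmetric disjoint pivots, which is the structural point the authors want to make.
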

%\FloatBarrier
\noindent
%To prove this theorem, 
%we will study the gap 
  %$\Delta_{BK}(A,B)$
%associated to the BK inequality, 
%defined as
%\begin{equation}
  %\label{defbkintro}
  %\Delta_{BK}(A,B)
  %\,=\,
  %P(A)P(B)-P(A\circ B)\,,
%\end{equation}
%The study of this gap is a difficult and interesting problem in itself, 
%and we will obtain only partial results, which nevertheless 
%allow to prove theorem~\ref{thrm1}.
%The first result in this direction is a slight improvement of the BK inequality,
%that we present in the next subsection.
%\subsection{The strengthened BK inequality}
\FloatBarrier
The second main result we would like to present and discuss 
is the strengthened BK inequality, stated in the next theorem.
It involves
the product-space analogue of disjoint occurrence, that we define now.
\begin{definition}[Disjoint testimony]
  \label{distes}
  Let \(A\) and \(B\) be two subsets of \(\Omega\).
  The disjoint testimony of \(A\) and \(B\) is the subset
  \(A\boxtimes B\) of \(A\times B\) consisting of the pairs \((x,y)\)
  admitting disjoint witnesses for \(A\) and \(B\), i.e.,
  \begin{multline}
    A\boxtimes B
    \,=\,
    \smash{\Big\{}\,(x,y)\in A\times B:\exists\,I,J\subset\unn,\quad
    I\cap J\,=\,\varnothing,
    \\
    I \text{ is a witness for } A \text{ in } x,\quad
    J \text{ is a witness for } B \text{ in } y
  \,\smash{\Big\}}\,.
  \end{multline}
\end{definition}
\FloatBarrier
\noindent
\noindent
%The proof of Theorem~\ref{thrm1} is based on a decomposition of the BK gap
%into two non-negative terms induced by a 
We denote by \(\Ptwo\) the product measure on $\Omega\times\Omega$.
\begin{theorem}%[Strong BK inequality]
  \label{thm2}
  For any pair of increasing events \(A,B\) in \(\Omega\), we have
  \begin{equation}
    \label{mainres}
    P(A\circ B)\,\leq\,\Ptwo(A\boxtimes B)\,.
  \end{equation}
\end{theorem}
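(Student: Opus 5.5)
We prove $P(A\circ B)\le \Ptwo(A\boxtimes B)$ by the classical coordinate-by-coordinate interpolation behind BK (van den Berg--Kesten, or Reimer's duplication argument for increasing events), tracking the set $A\boxtimes B$ instead of the product $A\times B$.

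For $k\in\zun$, let $\Aboxk{A}{k}{B}$ be the set of pairs $(x,y)\in\Omega\times\Omega$ such that there exist disjoint $I,J\subset\unn$ with the following properties. $I$ is a witness for $A$ in $x$ and $J$ is a witness for $B$ in the configuration $z$ given by $z_i=x_i$ for $i\le k$ and $z_i=y_i$ for $i>k$. In words, the first $k$ coordinates are shared and the last $n-k$ are duplicated. For $k=n$, $(x,y)\in \Aboxk{A}{n}{B}$ if and only if $x\in A\circ B$, so $\Ptwo(\Aboxk{A}{n}{B})=P(A\circ B)$. For $k=0$, $\Aboxk{A}{0}{B}=A\boxtimes B$. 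The theorem therefore follows from the monotonicity
\[
\Ptwo(\Aboxk{A}{k}{B})\le \Ptwo(\Aboxk{A}{k-1}{B}),\qquad 1\le k\le n .
\]

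To prove one step, fix all coordinates of $(x,y)$ other than $x_k,y_k$ and condition on them. The event then depends on the pair $(a,b)=(x_k,y_k)$, which has law $P\otimes P$ on $\zu^2$.

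\begin{itemize}
\item In $\Aboxk{A}{k}{B}$, coordinate $k$ is shared: both $A$ and $B$ read the value $a$, and $b$ is irrelevant.
\item In $\Aboxk{A}{k-1}{B}$, the $A$-part reads $a$ and the $B$-part reads $b$.
\end{itemize}

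Let $S\subset\zu^2$ be the section of $\Aboxk{A}{k}{B}$ and $T$ that of $\Aboxk{A}{k-1}{B}$. Then $S=S_0\times\zu$ for some $S_0\subset\zu$. If $(a,a)\in S$ then $(a,a)\in T$, since the same witnesses work: when $a=b$ the two interpretations coincide. Both sections are increasing because $A,B$ are increasing, so monotone changes preserve witnesses.

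The cases are as follows.
\begin{itemize}
\item If $S_0=\varnothing$, the inequality is trivial.
\item If $S_0=\zu$, then $(0,0)\in T$, and since $T$ is increasing, $T=\zu^2$.
\item If $S_0=\{1\}$, the $S$-probability is $p$ and $T\ni(1,1)$. It suffices to show $T\supset\{(1,0)\}$ or $T\supset\{(0,1)\}$, because either gives $P\otimes P(T)\ge p$. Take witnesses $I,J$ for $a=b=1$. Since $I\cap J=\varnothing$, $k$ lies in at most one of them. If $k\notin J$, then $(1,0)\in T$: the $B$-part does not look at coordinate $k$, so changing $b$ to $0$ keeps $J$ a witness. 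If $k\notin I$, then symmetrically $(0,1)\in T$.
\end{itemize}

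This disjointness argument is the main obstacle, and the one-coordinate analysis above settles it. The only subtle point to verify carefully is that witnesses are defined via all $y$ agreeing on $I$. A witness set not containing $k$ is therefore insensitive to the value at $k$, and membership in $\Aboxk{A}{k}{B}$ or $\Aboxk{A}{k-1}{B}$ depends only on the configurations read by each part. Integrating the conditional inequalities over the remaining coordinates and chaining over $k=n,\dots,1$ gives the result.
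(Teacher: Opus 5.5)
Your proof is correct and is essentially the paper's argument: the same coordinate-by-coordinate interpolation between $(A\circ B)\times\Omega$ and $A\boxtimes B$ (with the indexing reversed), conditioning on all coordinates other than $k$, and the same key point that disjointness puts $k$ in at most one of the two witnesses, so that $(1,1)\in T$ forces $(1,0)\in T$ or $(0,1)\in T$. This last step is exactly the paper's lemma that $\phi_k$ is never a double pivot. Your direct case analysis on $S_0$ takes the place of the paper's exact formula $E\phi(X,Y)-E\phi(X,X)=p(1-p)\bigl(1(\text{symmetric pivot})-1(\text{double pivot})\bigr)$, which the paper keeps because it identifies the gap at each step via symmetric disjoint pivots and is reused for the equality case.
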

\noindent
This theorem immediately implies the BK inequality. Indeed, 
the disjoint testimony 
$A\boxtimes B$ is 
a subset of $A\times B$, thus
\begin{equation}
  \label{imbk}
  P(A\circ B)
  \,\leq\,
  \Ptwo(A\boxtimes B)
  \,\leq\,
  \Ptwo(A\times B)
  \,=\,
  P(A)P(B)\,.
\end{equation}
%Theorem~\ref{thm2} readily implies the BK inequality.
%Indeed, 
%for any pair of increasing events $A,B$, the disjoint 
%testimony
%$\Abox{A}{B}$ is a subset of the product $A\times B$, whence 
%\begin{equation}
    %\label{jmbk}
%\Ptwo(A\boxtimes B)
%\,\le\, 
%\Ptwo(A\times B)\,=\,P(A)P(B)
%\,.
%\end{equation}
In their recent paper \cite{RT},
Radhakrishnan and Tassion say that this inequality is known
(see \cite{RT},
formula~(13) in remark~$7$).
They say that it follows easily from 
equation~$2.6$ in~\cite{BE}.
This equation states the following result:
%\begin{adjustwidth}{-1.5em}{-1.5em}
\begin{quote}%[Equation $2.6$ of~\cite{BE}]
  \itshape
  "In \cite{BK} also the following stronger result has been shown:
\begin{equation}
  \label{aa}
\mu\Big(
\bigcup_{1 \leq i \leq k} A_i \square B_i
\Big)
\leq
(\mu \times \mu)\Big(
\bigcup_{1 \leq i \leq k} A_i \times B_i
\Big),
\tag{2.6}
\end{equation}
where $A_i,B_i$ are increasing subsets of $\Omega$,
$i = 1,\dots,k$."
\end{quote}
%\end{adjustwidth}
\smallskip 

\noindent
In the above statement, the measure~$\mu$ is a probability measure 
on~$\Omega$ which belongs to a class called NBU, but we do not 
need to worry about these details, because the Bernoulli product
measure belongs to this class. The article \cite{BK}, referred to
in the statement, is the original paper of Van den Berg and Kesten.
While it is indeed true that the inequality~\eqref{mainres}
follows from~\eqref{aa}, it is not completely straightforward
and it requires a bit of thought. In any case, apart 
from the 
remark~$7$
in \cite{RT}, to the best of our knowledge, we are not aware 
of any other place where the inequality~\eqref{mainres} 
could be found in the literature.

If we focus on the Bernoulli product measure, as we do here, 
one would like to have a proof simpler than the one of 
the original paper \cite{BK}, which rests on the notion of NBU 
measure.
The paper 
\cite{RT} obtains the 
inequality~\eqref{mainres} as a byproduct of a more complicated 
argument, but only in the symmetric case $p=1/2$.
It turns out that 
theorem~\ref{thm2} 
can be proved with the help of the
%The proof of is a slight modification of 
proof of the BK inequality 
presented in Grimmett's book \cite{GR}. 
In fact, it is essentially a matter of 
giving the adequate definitions.
Let us precise a bit this point, in order to lift 
some ambiguities which gave us some hard time.
When working on the
  product $\zuen\times\zuen$, one can think naturally 
  of defining the disjoint occurrence of two events 
  $A,B$ included in $\zuen$ in two different ways:
  \smallskip

  \noindent
  $\bullet$ we can consider 
  $\zuen\times\zuen$ as $\zu^{2n}$, and we use 
  the definition of the disjoint occurrence $\circ$ on
  the $2n$-th product of $\zu$; 
  \smallskip

\noindent 
  $\bullet$ or we can consider 
  $\zuen\times\zuen$ as $\smash{\big(\zu^{2}\big)^n}$, and we use 
  the definition of the disjoint occurrence $\circ$ on 
  the $n$-th product of $\zu^2$. 
  \smallskip

  \noindent
It seems that the first choice is used for instance 
at the beginning of the proof
in Grimmett's book \cite{GR}. This is indeed what is suggested 
page~40 after formula~$2.20$
by the identity $A'\circ B'_m=A'\cap B'_m$. 
At the same time, Grimmett's proof rests in a crucial 
way on the decomposition of the events involving the
disjoint testimony, as in definition~\ref{distes}, and 
this 
corresponds to the second choice above.
We believe that the most transparent presentation would 
consist 
in working all the time with the disjoint testimony.
%This is why we introduce the events
%$\Aboxk{A}{k}{B}$ instead of $A\circ_k B$, in order 
%to avoid 
%the aforementioned ambiguity.
With this in mind, 
%it is clear that 
%This being said, 
%the main contribution of this text is to 
%rewrite 
Grimmett's proof 
%in light of these definitions, 
%and it readily 
yields also the inequality~\eqref{mainres}
(although this inequality is not formally stated in \cite{GR}). 
%it is a byproduct of the proof of the BK inequality presented 
%there.
%some specific subsets of the events 
%$A'\circ B'_k$ which corresponds to the definition 
%the proof uses
%It is also the point 
%of view adopted in the lecures notes of Duminil-Copin \cite{DC},
%see the exercice~12 for the scheme of proof of the BK inequality.

%It seems to us that the modification makes the proof more 
%transparent, in addition to providing a stronger inequality!

%\noindent
In any case, 
we will propose in 
section~\ref{proth2}
a new proof of the strengthened 
BK inequality. 
The general strategy is the same as Grimmett's proof,
that is we perform an induction over the coordinates
and at each step we prove an adequate inequality which 
allows to replace a diagonal coordinate by a pair 
of independent ones.
%compare the probability of an event with 
%its counterpart in a product space.
%by replacing a diagonal coordinate by an independent one is exactly detected by
%symmetric disjoint pivots.
% which rests on the notion of symmetric disjoint pivots and is inspired by 
The key argument to understand the case of equality in BK rests 
on the notion of 
symmetric disjoint pivots
(defined in subsection~\ref{subspdf}), 
already present 
%However, the core of the argument 
%is inspired by the techniques used by 
%Radhakrishnan and Tassion \cite{RT}.
%In fact, the symmetric disjoint pivots play a central role 
%In fact, this notion  plays also a central role 
%in the arguments of their work \cite{RT}: 
in the work 
of Radhakrishnan and Tassion \cite{RT}. 
%the disjoint occurrence $A\circ B$.
%The advantage of this proof is that the core argument 
%it is more symmetric, and it 
Instead of proving an inequality during the induction step,
we will prove an equality which 
puts forward the role played by the 
symmetric disjoint pivots. This equality is the key 
to characterize the events realizing the equality in BK.
\section{Proofs}
\label{proth2}

This section is devoted to the proofs of 
the two theorems~\ref{thrm1} and~\ref{thm2}.
We prove first 
the strengthened BK inequality stated in theorem~\ref{thm2}.
A careful examination of the proof will yield the 
case of equality in BK stated in theorem~\ref{thrm1}.
%Let us introduce some additional notation.
\subsection{Hybrid configurations and testimony}
%that we use in the proof.
The proof of the strengthened BK inequality relies on 
an interpolation scheme. 
In this subsection, we introduce some additional notation
necessary to describe this scheme.
For \(x,y\) in \(\Omega\) and \(k\) in \(\zun\),
we define the hybrid
configuration \(x*_{k}y\) obtained by taking the first \(k\) coordinates of \(y\) and the last \(n-k\) coordinates of \(x\), namely
\[
x*_{k}y \,=\, (y_1,\dots,y_k,x_{k+1},\dots,x_n)\,.
\]
For instance, we have
\begin{equation}
  \label{exon}
x*_{0}y \,=\, x\,,
\qquad
x*_{n}y \,=\, y\,.
\end{equation}
The disjoint testimony of two
    subsets
   \(A\) and \(B\)
    of \(\zuen\) was introduced in the definition~\ref{distes}.
In the proof, we will use the following equivalent definition:
\begin{multline}
A \boxtimes B\,
=\,
\smash{\Bigl\{}\,(x,y)\in \Omega^2 : \exists\, I\subset \unn,\
I \text{ witness for } A \text{ in } x,\\
I^c \text{ witness for } B \text{ in } y\,\smash{\Bigr\}}\,.
\end{multline}
%\begin{proof}[Proof of Theorem \ref{thm2}]
For \(k\) in \(\zun\), we introduce an intermediate version of the disjoint testimony,
in which the witness for \(B\) is tested not on \(y\) itself but on the hybrid
configuration \(x *_k y\).
More precisely, we define
\begin{multline}
  \forall k\in\zun\qquad\\
\Aboxk{A}{k}{B}
\,=\,
\smash{\Bigl\{}\,(x,y)\in \Omega^2 : \exists\, I\subset \unn,\
I \text{ witness for } A \text{ in } x,\,\\
I^c \text{ witness for } B \text{ in } x*_{k}y
\, \smash{\Bigr\}}\,.
\end{multline}
In particular, it follows from~\eqref{exon} that
\begin{equation}
  \label{cif}
\Aboxk{A}{0}{B}\,=\,(\Aocc{A}{B})\times \Omega\,,
\qquad
\Aboxk{A}{n}{B}\,=\,A \boxtimes B\,.
\end{equation}
As \(k\) increases from \(0\) to \(n\), the first \(k\)
coordinates are progressively transferred from \(x\) to \(y\),
and the events
\(\Aboxk{A}{k}{B}\), \(0\leq k\leq n\),
form a sequence of events 
interpolating between the disjoint occurrence on a single configuration and the disjoint
testimony on two independent configurations.
We will prove that
  \begin{equation}
    \label{recin}
  \forall k\in\unn\qquad
\Ptwo(\Aboxk{A}{k-1}{B})\,\leq\, \Ptwo(\Aboxk{A}{k}{B})\,.
  \end{equation}

\noindent
In view of~\eqref{cif}, this will readily imply the desired
inequality~\eqref{mainres}.
%The proof of inequality~\eqref{recin} relies on a local identity, which is the key to the argument.
%We define
%\[
    %\phi(u,v)
    %\,=\,
     %1_{\{\,(u,v)\in A\boxtimes B\,\}}\,.
%\]
%For a coordinate \(k\), and for $a,b$ in $\{\,0,1\,\}$, 
%we define also 
%\[
    %\phi_{ab}^k(u,v)
    %\,=\,
    %\phi\big(\setcoord{u}{k}{a},\setcoord{v}{k}{b}\big)\,.
%\]
%\noindent
%The next lemma is the key of the interpolation argument: it shows that 
%the gain obtained
%by replacing a diagonal coordinate by an independent one is exactly detected by
%symmetric disjoint pivots.
%We first rewrite the definition of a symmetric disjoint pivot
%with the notation
%$A\boxtimes B$ 
%for the disjoint testimony.
%%The second obstruction corresponds to the following situation: disjoint testimony
%%is impossible when the coordinate \(i\) is closed in both configurations, but it
%%becomes possible if this coordinate is opened in either one of the two
%%configurations.
  %Let \(A\) and \(B\) be two subsets of \(\Omega\), and let \(i\) in \(\unn\).
  %We say that \(i\) is a symmetric disjoint pivot for \((A,B)\) in 
  %the pair of configurations \((u,v)\)
  %if
  %\begin{gather}
    %\bigl(\setcoord{u}{i}{0},\setcoord{v}{i}{0}\bigr)
    %\notin A\boxtimes B\,,\\
  %%whereas
    %%\end{equation}
    %%\begin{equation}
    %\bigl(\setcoord{u}{i}{0},\setcoord{v}{i}{1}\bigr)
    %\in A\boxtimes B\,,\quad
    %\bigl(\setcoord{u}{i}{1},\setcoord{v}{i}{0}\bigr)
    %\in A\boxtimes B\,.
  %\end{gather}
%\noindent
\subsection{Diagonal versus independent coordinate}
We discuss here the core argument for 
the proof of the BK inequality. 
The goal at each step is 
to replace a diagonal coordinate by an independent one. 
As there are only two random variables involved,
it is more convenient to do the computation for 
a boolean function 
$\phi:\zu^2\to\zu$ depending on two coordinates.
The main question can be restated as follows.
Let $X,Y$ be two independent Bernoulli variables with 
parameter~$p$. When do we have 
\begin{equation}
  E\big(\phi(X,X)\big)\,\leq\,
  E\big(\phi(X,Y)\big)\,?
\end{equation}
Let us compute the difference of the expectations:
\begin{multline}
  \label{compdiff}
  E\big(\phi(X,Y)\big)\,-
  E\big(\phi(X,X)\big)\,=\,
\Bigl(
(1-p)^2\phi({0,0})
+p(1-p)\phi({1,0})
\\+p(1-p)\phi({0,1})
+p^2\phi({11})
\Bigr)
%\\
-
\Bigl(
(1-p)\phi({0,0})
+p\phi({1,1})
\Bigr)\\
\,=\,
p(1-p)
\big(
    \phi({1,0}) + \phi({0,1})
    - \phi({0,0}) - \phi({1,1})
\big)\,.
\end{multline}
For a general boolean function $\phi$, the quantity
\begin{equation}
  \label{posval}
    \Delta\phi\,=\,
    \phi({0,0}) + \phi({1,1})
    - 
    \phi({1,0}) - \phi({0,1})
\end{equation}
can take the five values $-2,-1,0,1,2$.
We will be concerned only with the case where the 
function $\phi$ is non-decreasing. In this case, the only 
%possible values for~\eqref{posval} are $-1,0,1$.
possible values for $\Delta\phi$ are $-1,0,1$.
If $\phi$ is constant, it is equal to $0$. Apart from the constant 
boolean functions, there are four non-decreasing boolean functions 
in two variables, which are:
  \smallskip

\noindent 
  $\bullet$ The double pivot: 
  $\phi(1,1)=1$, $\phi(1,0)= \phi(0,1)= \phi(0,0)= 0$. 
  \smallskip

\noindent 
  $\bullet$ The left pivot: 
  $\phi(1,1)=\phi(1,0)=1$, $ \phi(0,1)= \phi(0,0)= 0$. 
  \smallskip

\noindent 
  $\bullet$ The right pivot: 
  $\phi(1,1)=\phi(0,1)=1$, $ \phi(1,0)= \phi(0,0)= 0$. 
  \smallskip

\noindent 
  $\bullet$ The symmetric pivot: 
  $\phi(1,1)=\phi(1,0)= \phi(0,1)=1$, $\phi(0,0)= 0$. 
  \smallskip

\noindent 
A direct check shows that
\begin{equation}
  \label{dirche}
    \Delta\phi\,=\,
    \begin{cases}
      \phantom{-}1 &\text{ if }\phi\text{ is the double pivot}\,,\cr
      -1 &\text{ if }\phi\text{ is the symmetric pivot}\,,\cr
      \phantom{-}0 &\text{ otherwise}\,.
    \end{cases}
\end{equation}
We sum up the previous observations in the next lemma.
\begin{lemma}
  \label{corelem}
  Let $\phi$ be a non-decreasing boolean function depending on 
  two coordinates and 
let $X,Y$ be two independent Bernoulli variables with 
parameter~$p$. We have
 \begin{multline}
  \label{observ}
  E\big(\phi(X,Y)\big)\,-
  E\big(\phi(X,X)\big)\,=\,\cr
  p(1-p)\Big(
    1\big({\phi\text{ is the symmetric pivot}}\big)-
    1\big({\phi\text{ is the double pivot}}\big)
    \Big)\,.
\end{multline} 
\end{lemma}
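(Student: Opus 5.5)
The plan is to combine the expansion~\eqref{compdiff} with a finite case analysis over the non-decreasing boolean functions of two variables. First, since $X$ and $Y$ are independent Bernoulli variables with parameter $p$, the pair $(X,Y)$ takes the values $(0,0),(1,0),(0,1),(1,1)$ with probabilities $(1-p)^2,p(1-p),p(1-p),p^2$. The diagonal pair $(X,X)$ takes only the values $(0,0)$ and $(1,1)$, with probabilities $1-p$ and $p$. Subtracting the two expectations and using $(1-p)^2-(1-p)=-p(1-p)$ and $p^2-p=-p(1-p)$ yields~\eqref{compdiff}, namely
\begin{equation}
E\big(\phi(X,Y)\big)-E\big(\phi(X,X)\big)\,=\,-p(1-p)\,\Delta\phi\,,
\end{equation}
with $\Delta\phi$ as in~\eqref{posval}. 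This step is valid for any $\phi$, monotone or not.

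It then remains to show that $-\Delta\phi$ equals $1(\phi\text{ is the symmetric pivot})-1(\phi\text{ is the double pivot})$ whenever $\phi$ is non-decreasing. I will list all non-decreasing boolean functions on $\zu^2$. Monotonicity forces $\phi(0,0)\le\phi(1,0),\phi(0,1)\le\phi(1,1)$. If $\phi(1,1)=0$, then $\phi\equiv0$. If $\phi(0,0)=1$, then $\phi\equiv1$. Otherwise $\phi(0,0)=0$ and $\phi(1,1)=1$, and the pair $(\phi(1,0),\phi(0,1))$ ranges freely over $\zu^2$. This gives exactly the four functions named in the text: the double pivot for $(0,0)$, the left pivot for $(1,0)$, the right pivot for $(0,1)$, and the symmetric pivot for $(1,1)$. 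So there are six functions in total.

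For each of these I compute $\Delta\phi$ directly. Both constants give $0$. The double pivot gives $0+1-0-0=1$. The left and right pivots give $0+1-1-0=0$. The symmetric pivot gives $0+1-1-1=-1$. This confirms~\eqref{dirche}, and substituting into the displayed identity gives~\eqref{observ}.

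There is no real obstacle here. The only point that needs care is making sure the enumeration of monotone functions is exhaustive, and the argument above settles this by branching first on $\phi(1,1)$ and then on $\phi(0,0)$.
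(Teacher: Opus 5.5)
Your proposal is correct and follows essentially the same route as the paper: you expand the difference of expectations into $-p(1-p)\,\Delta\phi$ as in \eqref{compdiff}, then evaluate $\Delta\phi$ on the six non-decreasing boolean functions of two variables to recover \eqref{dirche}. Your explicit check that this list of six functions is exhaustive, obtained by branching on $\phi(1,1)$ and then on $\phi(0,0)$, spells out a step the paper only asserts.
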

\noindent
We conclude naturally that 
  \begin{equation}
  E\big(\phi(X,X)\big)\,\leq\,
  E\big(\phi(X,Y)\big)
  \end{equation}
if and only if $\phi$ is not the double pivot.
\subsection{The symmetric disjoint pivots}
\label{subspdf}
One source of 
obstruction to equality in BK
is due to the existence of 
%can be expressed with the notion 
symmetric disjoint pivots,
a new notion 
that we 
introduce in the next definition. 
%\noindent
%{\bf Coordinate replacement.} Let $x=(x_1,\cdots,x_n)$ be a 
%configuration in $\Omega$. For $i$ in $\unn$, we denote by 
%$x(i\leftarrow 0)$
%(respectively $x(i\leftarrow 1)$) 
%the configuration $x$ where the $i$-th component is set
%to $0$ (respectively to $1$), i.e.,
%\begin{align}
  %x(i\leftarrow 0)\,&=\,(x_1,\dots,x_{i-1},0,x_i,\dots,x_n)\,,\cr
  %x(i\leftarrow 1)\,&=\,(x_1,\dots,x_{i-1},1,x_i,\dots,x_n)\,.
%\end{align}
%Given a configuration 
For $u =(u_1,\cdots,u_n)$ a configuration in $\Omega$ and $i$ an
index in 
$\unn$, 
we denote by $u(i\leftarrow 0)$
(respectively $u(i\leftarrow 1)$) 
the configuration $u$ where the $i$-th component is set
to $0$ (respectively to $1$), i.e.,
\begin{align}
  u(i\leftarrow 0)\,&=\,(u_1,\dots,u_{i-1},0,u_i,\dots,u_n)\,,\cr
  u(i\leftarrow 1)\,&=\,(u_1,\dots,u_{i-1},1,u_i,\dots,u_n)\,.
\end{align}
\begin{definition}[Symmetric disjoint pivot]
  \label{defsdpi}
  Let \(A,B\) be two subsets of~\(\Omega\).
  Let $(u,v)$ be an element of $\Omega\times \Omega$, and let \(i\) belong to 
 \(\unn\).
  We say that \(i\) is a symmetric disjoint pivot for \((A,B)\) in \((u,v)\)
  if there are disjoint witnesses for the events $A,B$ in
    $\bigl(\setcoord{u}{i}{1},\setcoord{v}{i}{0}\bigr)$ 
    and 
    $\bigl(\setcoord{u}{i}{0},\setcoord{v}{i}{1}\bigr)$,
    but not in 
    $\bigl(\setcoord{u}{i}{0},\setcoord{v}{i}{0}\bigr)$.
\end{definition}
%equipped with the product measure $P\times P$.
\noindent
\begin{figure}[hbt]
\vbox{
  \centerline{A configuration $u$ realizing the event $\{\,A\longleftrightarrow B\,\}$}
  \bigskip
\centerline{
% ---- SEULE LIGNE A CHANGER POUR REDIMENSIONNER ----
\psset{unit=1.084cm}
% -----------------------------------------------------
%\pspicture(-11,-11)(11,11)
\pspicture(-5,-3)(5,3)
\psset{dotsize=0.24}

% Grille de points de fond, en coordonnées pures (pas de \newdimen)
\multido{\iX=-5+1}{11}{
  \multido{\iY=-3+1}{7}{
    \psdots[fillcolor=black,dotstyle=x](\iX,\iY)
  }
}

\psset{dotsize=0.27}

%\psdots[fillcolor=white,dotstyle=o](-5,-2)(-4,-2)(-4,-1)(-4,0)(-3,0)(-2,0)
%\psdots[fillcolor=black,dotstyle=o](-5,2)(-4,2)(-4,1)(-3,1)(-2,1)
\psdots[fillcolor=black,dotstyle=o](-4,1)(-3,1)(-2,1)
\psdots[fillcolor=black,dotstyle=o](-2,0)(2,1)(2,0)
%\psdots[fillcolor=black,dotstyle=o](5,-2)(4,-2)(4,-1)(4,0)(3,0)(2,0)
%\psdots[fillcolor=black,dotstyle=o](5,2)(4,2)(4,1)(3,1)(2,1)
\psdots[fillcolor=black,dotstyle=o](4,1)(3,1)(2,1)
%\psdots[fillcolor=black,dotstyle=o](-4,3)(4,3)
\psdots[fillcolor=gray,dotstyle=square](-2,2)(-1,2)(0,2)(1,2)(2,2)
\psdots[fillcolor=gray,dotstyle=square](-2,-2)(-1,-2)(0,-2)(1,-2)(2,-2)(2,-1)(-2,-1)

%\rput(-4.35,1){$a$}
%\rput(4.35,1){$b$}
%\rput(-4.35,-1){$c$}
%\rput(4.35,-1){$d$}

\rput(-4.35,1){$A$}
\rput(4.35,1){$B$}
\rput(-4.35,-1){$C$}
\rput(4.35,-1){$D$}
%

%\psdot[fillcolor=white,dotstyle=x](-3,4)
%\rput(-2,4){closed site}
%\psdot[fillcolor=black,dotstyle=o](2,6)
%\psdot[dotstyle=o](2,4)
%\rput(3,4){open site}

\endpspicture
}
\bigskip
\medskip
\bigskip
  \centerline{A configuration $v$ realizing the event $\{\,C\longleftrightarrow D\,\}$}
  \bigskip
\centerline{
% ---- SEULE LIGNE A CHANGER POUR REDIMENSIONNER ----
\psset{unit=1.1cm}
% -----------------------------------------------------
%\pspicture(-11,-11)(11,11)
\pspicture(-5,-3)(5,3)
\psset{dotsize=0.24}

% Grille de points de fond, en coordonnées pures (pas de \newdimen)
\multido{\iX=-5+1}{11}{
  \multido{\iY=-3+1}{7}{
    \psdots[fillcolor=black,dotstyle=x](\iX,\iY)
  }
}

\psset{dotsize=0.27}

%\psdots[fillcolor=white,dotstyle=o](-5,2)(-4,2)(-4,1)(-4,0)(-3,0)(-2,0)
%\psdots[fillcolor=black,dotstyle=o](-5,-2)(-4,-2)(-4,-1)(-3,-1)(-2,-1)
\psdots[fillcolor=black,dotstyle=o](-5,-1)(-4,-1)(-3,-1)
\psdots[fillcolor=black,dotstyle=o](-5,-1)(-5,0)(-5,1)(-5,2)(-4,2)(-3,2)
%\psdots[fillcolor=black,dotstyle=o](5,-2)(4,-2)(4,-1)(3,-1)(2,-1)
\psdots[fillcolor=black,dotstyle=o](5,-1)(4,-1)(3,-1)
\psdots[fillcolor=black,dotstyle=o](5,-1)(5,0)(5,1)(5,2)(4,2)(3,2)
%\psdots[fillcolor=white,dotstyle=o](5,2)(4,2)(4,1)(4,0)(3,0)(2,0)
%\psdots[fillcolor=black,dotstyle=o](-4,3)(4,3)
\psdots[fillcolor=gray,dotstyle=square](-2,2)(-1,2)(0,2)(1,2)(2,2)
\psdots[fillcolor=gray,dotstyle=square](-2,-2)(-1,-2)(0,-2)(1,-2)(2,-2)(2,-1)(-2,-1)

\rput(-4.35,1){$A$}
\rput(4.35,1){$B$}
\rput(-4.35,-1){$C$}
\rput(4.35,-1){$D$}
%
%\rput(-4.35,1){$a$}
%\rput(4.35,1){$b$}
%\rput(-4.35,-1){$c$}
%\rput(4.35,-1){$d$}

%\rput(-5.5,2){$a$}
%\rput(5.5,2){$b$}
%\rput(-5.5,-2){$c$}
%\rput(5.5,-2){$d$}

%\psdot[fillcolor=white,dotstyle=x](-3,4)
%\rput(-2,4){closed site}
%\psdot[fillcolor=black,dotstyle=o](2,6)
%\psdot[dotstyle=o](2,4)
%\rput(3,4){open site}

\endpspicture
}
\bigskip
\medskip
\bigskip
\centerline{
  Two configurations of site percolation in the rectangle 
  $[-5,5]\times[-3,3]$
}
\medskip
\centerline{
\psset{dotsize=0.25}
 %\psdot[dotstyle=x](0,0) 
 \raisebox{0.6ex}{\psdot[dotstyle=x](0,0)}
 \kern5pt=\kern3pt closed site\,,\qquad
 \raisebox{0.6ex}{\psdot[fillcolor=black,dotstyle=o](0,0)}
 \kern5pt
 and 
 \kern5pt
 \raisebox{0.6ex}{\psdot[fillcolor=gray,dotstyle=square](0,0)}
 \kern5pt=\kern3pt open site\,,\qquad
 \raisebox{0.6ex}{\psdot[fillcolor=gray,dotstyle=square](0,0)}
 \kern5pt=\kern3pt 
symmetric disjoint pivot
}
\bigskip
\caption{The pair $(u,v)$ is in 
  $\{\,A\longleftrightarrow B\,\}\boxtimes
  \{\,C\longleftrightarrow D\,\}$
%Symmetric pivots in percolation
}
\label{figpat}
}
\end{figure}\noindent
\noindent
In fact, this notion  plays also a central role 
%in the arguments of their work \cite{RT}: 
in the work 
of Radhakrishnan and Tassion \cite{RT}: 
when $F$ is taken to 
be the indicator function of 
$A\circ B$,
the function 
$D_iF$ defined in equation~$(6)$ of \cite{RT} is the indicator 
function of a symmetric disjoint pivot. 

The BK inequality was invented by Van den Berg and Kesten \cite{BK} in 
relationship with a percolation problem. Figure~\ref{figpat}
presents two configurations of site percolation realizing two 
connection events, and the symmetric disjoint pivots are 
marked by gray squares.
%\FloatBarrier

Surprisingly, 
forcing the existence of disjoint witnesses for all the 
configurations
%the condition on the 
%disjoint witnesses of theorem~\ref{thrm1} 
implies the absence of 
symmetric disjoint pivots, as stated in the next lemma.
\begin{lemma} 
  \label{suprilem}
  Let $A,B$ be two increasing events. 
If every configuration belonging to $A\times B$ admits disjoint witnesses,
then 
    no configuration in $\Omega\times \Omega$ has a symmetric disjoint pivot.
\end{lemma}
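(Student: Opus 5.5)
The idea is to argue by contradiction, using nothing more than the fact that disjoint testimony forces membership in the product $A\times B$. Assume that every configuration of $A\times B$ admits disjoint witnesses, i.e.\ $A\times B\subset A\boxtimes B$. Suppose, for contradiction, that some $(u,v)$ in $\Omega\times\Omega$ has a symmetric disjoint pivot $i$ in the sense of definition~\ref{defsdpi}.

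First I record that $A\boxtimes B\subset A\times B$. If $I$ is a witness for $A$ in $x$, then taking $y=x$ in definition~\ref{dewi} gives $x\in A$. The same holds for $B$. Hence whenever a pair admits disjoint witnesses, its first component lies in $A$ and its second lies in $B$.

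Next I apply this to the two pairs where the pivot condition guarantees disjoint witnesses.
\begin{itemize}
\item Since there are disjoint witnesses in $\bigl(\setcoord{u}{i}{0},\setcoord{v}{i}{1}\bigr)$, we get $\setcoord{u}{i}{0}\in A$.
\item Since there are disjoint witnesses in $\bigl(\setcoord{u}{i}{1},\setcoord{v}{i}{0}\bigr)$, we get $\setcoord{v}{i}{0}\in B$.
\end{itemize}
Therefore $\bigl(\setcoord{u}{i}{0},\setcoord{v}{i}{0}\bigr)$ belongs to $A\times B$. By the hypothesis it admits disjoint witnesses. This contradicts the third requirement in the definition of a symmetric disjoint pivot, so no such pivot exists.

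There is no real obstacle here. The only point to state carefully is the inclusion $A\boxtimes B\subset A\times B$, which follows directly from the definition of a witness. Notably, the monotonicity of $A$ and $B$ is not even needed; it belongs to the statement only because of the context in which the lemma is used.
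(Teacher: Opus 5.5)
Your proof is correct and follows the paper's argument: both read off $\setcoord{u}{i}{0}\in A$ and $\setcoord{v}{i}{0}\in B$ from the two pairs that admit disjoint witnesses, then use $A\times B=A\boxtimes B$ to conclude that $\bigl(\setcoord{u}{i}{0},\setcoord{v}{i}{0}\bigr)$ also admits disjoint witnesses. Your remark that monotonicity of $A$ and $B$ plays no role is also accurate, since the paper's proof does not use it either.
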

\begin{proof}
  Suppose that 
  every configuration belonging to $A\times B$ admits disjoint witnesses.
Then the disjoint testimony 
  \(A\boxtimes B\) coincides with the event 
  %\(A\boxtimes B\) is equal to the event 
  $A\times B$.
  Let $(u,v)$ be an element of 
  $\Omega\times \Omega$. 
  Let $i$ be an index in $\unn$ and suppose that 
  \begin{gather}
    \label{sura}
    \bigl(\setcoord{u}{i}{1},\setcoord{v}{i}{0}\bigr) \,\in\,
  A\boxtimes B\,,\\
    \label{surb}
    \bigl(\setcoord{u}{i}{0},\setcoord{v}{i}{1}\bigr) \,\in\,
  A\boxtimes B\,.
  \end{gather}
As 
  $A\boxtimes B$ is equal to 
  $A\times B$, we deduce 
  from~\eqref{sura} that $\setcoord{v}{i}{0}$ is in $B$,   
  and 
  from~\eqref{surb} that $\setcoord{u}{i}{0}$ is in $A$.   
  Therefore 
  \begin{equation}
    \bigl(\setcoord{u}{i}{0},\setcoord{v}{i}{0}\bigr) \,\in\,
  A\times B\,,
  \end{equation}
  but using again that 
  $A\times B=A\boxtimes B$, we see that  
  $\smash{\bigl(\setcoord{u}{i}{0},\setcoord{v}{i}{0}\bigr)}$ realizes also the disjoint 
    testimony of $A$ and $B$, and we conclude that $i$ is not a 
    symmetric disjoint pivot!
\end{proof}
\FloatBarrier
\subsection{Proof of the inequality at step~$k$}
We proceed now to the proof of the inequality~\eqref{recin}.
Let
us fix \(k\) in \(\{\,1,\dots,n\,\}\).
At step \(k-1\), the coordinate \(k\) of the configuration
\(x*_{k-1}y\) is equal to \(x_k\), whereas at step \(k\), it is equal to \(y_k\). 
%Once the other coordinates have been fixed, 
%in order to compare 
Thus the
transition from \(\Aboxk{A}{k-1}{B}\) to
\(\Aboxk{A}{k}{B}\) consists in 
replacing the diagonal pair \((x_k,x_k)\) by
the independent pair \((x_k,y_k)\). 
To compare the probabilities of the events
\(\Aboxk{A}{k-1}{B}\) and
\(\Aboxk{A}{k}{B}\), we will
perform a conditioning on all the coordinates other than $k$,
%$\smash{\big((x_i,y_i),1\leq i\leq n, {i\neq k}\big) }$,
%\((x_i,y_i)_{i\neq k}\), 
and we will make appeal to lemma~\ref{corelem}.
So we fix a choice
%we perform a conditioning on all 
%the configuration of 
for the coordinates other than $k$:
\begin{equation}
\big((x_i,y_i),1\leq i\leq n, {i\neq k}\big) \,.
\end{equation}
%of the coordinates apart from $k$. 
%coordinates
%\((x_i,y_i)_{i\neq k}\). 
%\FloatBarrier\noindent 
We define the partial  map $\phi_k:\zu^2\to\zu$ by setting
\begin{multline}
  \label{defphik}
  \forall a,b\in\zu\qquad
\phi_k(a,b) \,=\,\cr
     1_{A\boxtimes B}\big(
       (x_1,\dots,x_{k-1},a,x_{k+1},\dots,x_n),
       (y_1,\dots,y_{k-1},b,x_{k+1},\dots,x_n)
\big)\,.
\end{multline}
With these definitions, we have, for any 
  $x_k,y_k$ in $\zu$,
  %\forall x_k,y_k\in\zu\qquad
\begin{align}
  %\forall x_k,y_k\in\zu\qquad
  \label{jdfa}
  \phi_k(x_k,x_k) &\,=\,
     1_{A\boxtimes B}(x, x*_{k-1}y)\,=\,
     1_{\Aboxk{A}{k-1}{B}}(x,y)\,,\\
  \label{jdfb}
\phi_k(x_k,y_k) &\,=\,
     1_{A\boxtimes B}(x, x*_{k}y)\,=\,
     1_{\Aboxk{A}{k}{B}}(x,y)\,.
\end{align}
We compute next the 
conditional probability of 
\(\Aboxk{A}{k-1}{B}\) 
with the help of~\eqref{jdfa}.
Denoting by $E_k$ the conditional expectation with respect to 
the $k$-th coordinate once the others are fixed and equal to 
$(x_i,y_i),1\leq i\leq n, {i\neq k}$, we have
\begin{equation}
P\Big( \Aboxk{A}{k-1}{B} \,\big|\, 
(x_i,y_i),1\leq i\leq n, {i\neq k}\Big) 
%\cr
\,=\,
E_k\big(
     1_{\Aboxk{A}{k-1}{B}} \big)
\,=\,
E_k\big( \phi_k(x_k,x_k)\big)\,.
\end{equation}
Similarly, using~\eqref{jdfb}, we have
\begin{equation}
P\Big( \Aboxk{A}{k}{B} \,\big|\, (x_i,y_i),1\leq i\leq n, {i\neq k}\Big) 
\,=\,
E_k\big( \phi_k(x_k,y_k)\big)\,.
\end{equation}
Let us denote by
$P_k$ the conditional probability  
knowing that the coordinates other than $k$ are equal to
$(x_i,y_i),1\leq i\leq n, {i\neq k}$.
%The conditional distribution of $(x_k,y_k)$ knowing all the other coordinates is simply a 
The probability $P_k$ is simply the
     product of two Bernoulli distributions of parameter $p$. 
Furthermore, the map $\phi_k$ is non-decreasing, so we are in position 
to apply lemma~\ref{corelem}, and we get
\begin{multline}
  \label{posapp}
%P\Big( \Aboxk{A}{k}{B} \,\big|\, (x_i,y_i),1\leq i\leq n, {i\neq k}\Big) 
%-
%P\Big( \Aboxk{A}{k-1}{B} \,\big|\, (x_i,y_i),1\leq i\leq n, {i\neq k}\Big) 
P_k\big( \Aboxk{A}{k}{B} \big)
-
P_k\big( \Aboxk{A}{k-1}{B} \big)
%\cr
\,=\,
E_k\big( \phi_k(x_k,y_k)\big)-
E_k\big( \phi_k(x_k,x_k)\big)
\,=\,\qquad
\cr
p(1-p)
\Big(
1 \big(\phi_k\text{ is a symmetric pivot} \big)
-1 \big(\phi_k\text{ is a double pivot} \big)
\Big)
  \,.
\end{multline}
The key point is stated in the following lemma.
It is due to the very specific structure of the disjoint testimony event.
\begin{lemma}
  \label{keypoi}
For any choice of 
$\big((x_i,y_i),1\leq i\leq n, {i\neq k}\big)$,
%\begin{equation}
%\big((x_i,y_i),1\leq i\leq n, {i\neq k}\big) \,,
%\end{equation}
the 
%boolean 
function $\phi_k$ defined in~\eqref{defphik}
is not a double pivot.
\end{lemma}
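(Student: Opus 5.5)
We argue by contradiction: suppose $\phi_k$ is a double pivot, so that $\phi_k(1,1)=1$ while $\phi_k(1,0)=\phi_k(0,1)=\phi_k(0,0)=0$. Write $\bar x=(x_1,\dots,x_{k-1},1,x_{k+1},\dots,x_n)$ and $\bar y=(y_1,\dots,y_{k-1},1,x_{k+1},\dots,x_n)$. The condition $\phi_k(1,1)=1$ gives a set $I\subset\unn$ such that $I$ is a witness for $A$ in $\bar x$ and $I^c$ is a witness for $B$ in $\bar y$. The index $k$ lies either in $I$ or in $I^c$, and we treat the two cases separately.

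If $k\notin I$, then $I$ does not involve coordinate $k$. Any configuration agreeing with $\bar x(k\leftarrow 0)$ on $I$ also agrees with $\bar x$ on $I$. Hence $I$ is still a witness for $A$ in $\bar x(k\leftarrow 0)$. The complement $I^c$ is unchanged as a witness for $B$ in $\bar y$. This gives $\phi_k(0,1)=1$, a contradiction.

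If $k\in I$, then $k\notin I^c$, and the same reasoning shows that $I^c$ is a witness for $B$ in $\bar y(k\leftarrow 0)$. Together with $I$ witnessing $A$ in $\bar x$, this gives $\phi_k(1,0)=1$, again a contradiction.

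The only point to check is that a witness set not containing $k$ remains a witness when coordinate $k$ is modified. This is immediate from Definition~\ref{dewi}, because the witness condition only involves the coordinates in $I$. The key structural fact is that coordinate $k$ is assigned to exactly one of the two complementary witnesses, so it cannot be needed by both. The argument does not even use that $A$ and $B$ are increasing; monotonicity enters only through the earlier application of Lemma~\ref{corelem}.
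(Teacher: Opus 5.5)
Your proof is correct and is essentially the paper's argument. From $\phi_k(1,1)=1$ you extract complementary witnesses $I$ and $I^c$, then split on whether $k\in I$: the witness that avoids coordinate $k$ survives setting that coordinate to $0$, which gives $\phi_k(1,0)=1$ or $\phi_k(0,1)=1$. Phrasing this as a contradiction rather than as a direct implication is only cosmetic.
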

\begin{proof}
Let us fix
$\big((x_i,y_i),1\leq i\leq n, {i\neq k}\big)$
%and suppose that the function $\phi_k$ is a double pivot.
%We would then have $\phi_k(1,1)=1$, meaning that
and suppose that 
$\phi_k(1,1)=1$. This means that
\begin{equation}
       \Big(\big(x_1,\dots,x_{k-1},1,x_{k+1},\dots,x_n\big),
       \big(y_1,\dots,y_{k-1},1,x_{k+1},\dots,x_n\big)\Big)\in
     {A\boxtimes B}\,.
\end{equation}
By the very definition of 
     ${A\boxtimes B}$, there exists a subset $I$ of $\unn$ such that
\begin{gather}
I \text{ is a witness for } A \text{ in } 
      \big(x_1,\dots,x_{k-1},1,x_{k+1},\dots,x_n\big)\,,\\
I^c \text{ is a witness for } B \text{ in } 
       \big(y_1,\dots,y_{k-1},1,x_{k+1},\dots,x_n\big)\,.
\end{gather}
%The point is now that $k$ cannot be simultaneously in $I$ and $I^c$.
%So we have two cases:
We consider two subcases:
\smallskip

\noindent
$\bullet$
If $k$ is in $I$, then \(I^c\) is still a witness for \(B\) in 
$\smash{\big(y_1,\dots,y_{k-1},0,x_{k+1},\dots,x_n\big)}$, hence 
$\phi_k(1,0)=1$;
\smallskip

\noindent
$\bullet$
If $k$ is not in $I$, then \(I\) is still a witness for \(A\) in 
$\smash{\big(x_1,\dots,x_{k-1},0,x_{k+1},\dots,x_n\big)}$, hence
$\phi_k(0,1)=1$.
\smallskip

\noindent
Therefore,
either
$\phi_k(1,0)=1$ or
%$\phi_k(0,1)=1$. Thus $\phi_k$ is not a double pivot.
$\phi_k(0,1)=1$, and $\phi_k$ is not a double pivot.
\end{proof}
\noindent
Applying lemma~\ref{keypoi}, we obtain that
\begin{equation}
  \label{applelm}
1 \big(\phi_k\text{ is a double pivot} \big)\,=\,0\,.
\end{equation}
Substituting~\eqref{applelm} into~\eqref{posapp}, we conclude that
\begin{equation}
  \label{sibh}
P_k\big( \Aboxk{A}{k}{B} \big)
-
P_k\big( \Aboxk{A}{k-1}{B} \big)
=
p(1-p)
1\big(\phi_k\text{ is a symmetric pivot} \big)
  .
\end{equation}
We observe next that 
$\phi_k$ is a symmetric pivot if and only if 
$k$ is a symmetric disjoint
pivot for $(A,B)$  in $(x,x*_k y)$.
Taking expectations in~\eqref{sibh}, we get finally
\begin{multline}
  \label{secgap}
\Ptwo(\Aboxk{A}{k}{B})
 - \Ptwo(\Aboxk{A}{k-1}{B}) 
\,=\, 
 \\
p(1-p)\,
\Ptwo
  \Bigg(
  \Bigg\{
\begin{matrix}
    (x,y)\in\Omega\times\Omega:
k \text{ is a symmetric disjoint}\\
\text{pivot for } (A,B) \text{ in } (x,x*_k y)
\end{matrix}
  \Bigg\}
  \Bigg)\,.
%\Ptwo\Big(k \text{ is a symmetric disjoint pivot for } (A,B) \text{ in } (x,x*_k y)\Big)\,.
\end{multline}
This readily yields 
the inequality~\eqref{recin}, and this 
concludes the proof of the 
strengthened BK inequality~\eqref{mainres}.
%\section{Consequences for the BK gap}
%\section{The BK gap}
%\newpage
\subsection{The case of equality}
We will complete here the proof of theorem~\ref{thrm1}.
%We complete now the proofs of the results stated in the introduction. 
  Let $A,B$ be two increasing events.
It follows from the proof of theorem~\ref{thm2} that 
\begin{multline}
  \label{chainine}
P(A\circ B)\,=\,
  \Ptwo(\Aboxk{A}{0}{B}) 
\,\leq\,
  \Ptwo(\Aboxk{A}{1}{B}) 
\,\leq\,\cdots
\cr
\,\leq\,
\Ptwo(\Aboxk{A}{n}{B})
\,=\,
    \Ptwo(A\boxtimes B)
\,\leq\,
P(A)P(B)\,.
\end{multline}
Suppose furthermore that the events $A,B$ realize the equality
$$P(A\circ B)\,=\,P(A)P(B)\,.$$
Then the chain of inequalities~\eqref{chainine} consists
only of equalities. In particular, if we look at the 
last inequality, we must have 
\begin{equation}
  \label{refvv}
    \Ptwo(A\boxtimes B) \,=\, P(A)P(B)
\,=\,
  \Ptwo(A\times B)
%\,,\\
  %\label{refww}
%\Ptwo(\Aboxk{A}{n-1}{B})
%\,=\,
%\Ptwo(\Aboxk{A}{n}{B})
    \,.
\end{equation}
Recalling that
    $A\boxtimes B$ is a subset of $A\times B$, we can 
    rewrite~\eqref{refvv} as
    \begin{equation}
      \label{rewxc}
  \Ptwo\bigl((A\times B)\setminus(A\boxtimes B)\bigr)\,=\,0\,.
    \end{equation}
Since \(p\) belongs to $]0,1[$, the product measure \(\Ptwo\) has full support on
\(\Omega^2\). Hence the equality~\eqref{rewxc} implies that
  $(A\times B)\setminus(A\boxtimes B)$ is void,
%\[
  %(A\times B)\setminus(A\boxtimes B)\,=\,\varnothing\,,
%\]
which is exactly the condition of the theorem.
%It remains to obtain the second condition. 
%Using the 
%identity~\eqref{secgap}, 
%and noticing that 
%\(x*_n y=y\), 
%we see that the 
%second equality~\eqref{refww} is equivalent to 
%\begin{equation}
%\label{recondkn} 
%p(1-p)\,\Ptwo
  %\Bigg(
  %\Bigg\{
%\begin{matrix}
    %(x,y)\in\Omega\times\Omega:
%n \text{ is a symmetric disjoint}\\
%\text{pivot for } (A,B) \text{ in } (x,y)
%\end{matrix}
  %\Bigg\}
  %\Bigg)\,=\,0\,.
%\end{equation}
%
%This means that there is no configuration 
%in $A\times B$ for which the coordinate $n$ is 
%a double symmetric pivot.
%the preceding
%argument gives
%
%The second condition follows.

Conversely, let us consider two increasing events $A,B$ satisfying the 
condition of theorem~\ref{thrm1}.
%, i.e.,
%\[
  %(A\times B)\setminus(A\boxtimes B)\,=\,\varnothing\,,
%\]
%so 
We apply lemma~\ref{suprilem} and we obtain
that 
%\begin{equation}
  %\label{lasrzz}
    %\Ptwo(A\boxtimes B) 
%\,=\, \Ptwo(A\times B)
    %\,=\, P(A)P(B)
%\,.
%\end{equation}
%The second condition 
%says that  
\begin{multline}
  \forall k\in\unn\quad
\forall
    (x,y)\in\Omega\times\Omega\cr
k \text{ is not a symmetric disjoint}
\text{ pivot for } (A,B) \text{ in } (x,y)\,.
\end{multline}
%\[
  %Ptwo\bigl(\SDP_i(x,y)\bigr)\,=\,0
  %qquad\text{for every } i\in\unn\,.
%]
%This implies that \(\SDP_i(u,v)\) never occurs, for any \((u,v)\in\Omega^2\). 
%never occurs at the hybrid pairs \((x,x*_k y)\). Thus the second term in
This implies furthermore that
\begin{equation}
  \forall k\in\unn\qquad
  \Bigg\{
\begin{matrix}
    (x,y)\in\Omega\times\Omega:
k \text{ is a symmetric disjoint}\\
\text{pivot for } (A,B) \text{ in } (x,x*_k y)
\end{matrix}
  \Bigg\}\,=\,\varnothing\,.
\end{equation}
Using the 
identity~\eqref{secgap}, we see that
\begin{equation}
  \label{saefgsecgap}
  \forall k\in\unn\qquad
\Ptwo(\Aboxk{A}{k}{B})
 \,=\, \Ptwo(\Aboxk{A}{k-1}{B}) 
%\,=\, 0
\,.
%\Ptwo\Big(k \text{ is a symmetric disjoint pivot for } (A,B) \text{ in } (x,x*_k y)\Big)\,.
\end{equation}
Therefore all the inequalities in the chain of 
inequalities~\eqref{chainine} are in fact equalities
and we have indeed equality in the BK inequality.
This proves the converse implication.

%We finally prove the lower bound on the first gap.
%
%\begin{proof}[Proof of Proposition~\ref{lbi}]
%If a coordinate \(i\) is pivotal for \(A\) in \(x\), then every witness for \(A\) in \(x\) must contain \(i\). Likewise, if \(i\) is pivotal for \(B\) in \(y\), then every witness for \(B\) in \(y\) must contain \(i\). Hence, on the event
%\[
%\bigl\{\,(x,y)\in A\times B : \cP(A,x)\cap \cP(B,y)\,\ne\,\varnothing\,\bigr\}\,,
%\]
%every witness for \(A\) intersects every witness for \(B\). 
%Thus the above event is included in
%\((A\times B)\setminus A \boxtimes B\).
%\end{proof}
%\noindent
\bibliographystyle{amsplain}
\bibliography{abk.bib}

\end{document}